\begin{document}
\theoremstyle{plain}
\newtheorem{thm}{Theorem}[section]
\newtheorem{theorem}[thm]{Theorem}
\newtheorem{lemma}[thm]{Lemma}
\newtheorem{corollary}[thm]{Corollary}
\newtheorem{proposition}[thm]{Proposition}
\newtheorem{conjecture}[thm]{Conjecture}
\newtheorem{obs}[thm]{}
%%%%%%%%%%%%%%%%%%%% Text roman %%%%%%%%%%%%%%%%%%%%%%%%%%%%%
\theoremstyle{definition}
\newtheorem{construction}[thm]{Construction}
\newtheorem{notations}[thm]{Notations}
\newtheorem{question}[thm]{Question}
\newtheorem{problem}[thm]{Problem}
\newtheorem{remark}[thm]{Remark}
\newtheorem{remarks}[thm]{Remarks}
\newtheorem{definition}[thm]{Definition}
\newtheorem{claim}[thm]{Claim}
\newtheorem{assumption}[thm]{Assumption}
\newtheorem{assumptions}[thm]{Assumptions}
\newtheorem{properties}[thm]{Properties}
\newtheorem{example}[thm]{Example}
\newtheorem{comments}[thm]{Comments}
\newtheorem{blank}[thm]{}
\newtheorem{defn-thm}[thm]{Definition-Theorem}

\newcommand{\sM}{{\mathcal M}}

%%%%%%%%%%%%%%%%%%%%%%%%%%%%%%%%%%%%%%%%%%%%%%%%%%%%%%%%%%%%%%

\title[An effective recursion formula]{An effective recursion formula for computing intersection numbers}
        \author{Kefeng Liu}
        %\dedicatory{Center of Math Sciences, Zhejiang University;
         %   Department of Mathematics, UCLA}
        \address{Center of Mathematical Sciences, Zhejiang University, Hangzhou, Zhejiang 310027, China;
                Department of Mathematics,University of California at Los Angeles,
                Los Angeles, CA 90095-1555, USA}
        \email{liu@math.ucla.edu, liu@cms.zju.edu.cn}
        \author{Hao Xu}
        \address{Center of Mathematical Sciences, Zhejiang University, Hangzhou, Zhejiang 310027, China}
        \email{haoxu@cms.zju.edu.cn}

        \begin{abstract}
           We prove a new effective recursion formula for computing all intersection indices (integrals of $\psi$ classes) on the moduli space of
           curves, inducting only on the genus.
        \end{abstract}
    \maketitle

\maketitle

\section{Introduction}

We denote by $\overline{\sM}_{g,n}$ the moduli space of stable
$n$-pointed genus $g$ complex algebraic curves. Let $\psi_i$ be the
first Chern class of the line bundle whose fiber over each pointed
stable curve is the cotangent line at the $i$-th marked point.

We adopt Witten's notation in this paper,
$$\langle\tau_{d_1}\cdots\tau_{d_n}\rangle_g:=\int_{\overline{\sM}_{g,n}}\psi_1^{d_1}\cdots\psi_n^{d_n}.$$

Witten-Kontsevich theorem \cite{Wi, Ko} provides a recursive way to
compute all these intersection numbers. However explicit and
effective recursion formulae for computing intersection indices are
still very rare and very welcome.

Our $n$-point function formula \cite{LX} computes intersection
indices recursively by decreasing the number of marked points. So it
is natural to ask whether there exists a recursion formula which
explicitly expresses intersection indices in terms of intersection
indices with strictly lower genus. Motivated by Witten's KdV
coefficient equation and our $n$-point function formula, we find
such a recursion formula.
\begin{theorem}
Let $d_j\geq0$ and $\sum_{j=1}^n d_j=3g+n-3$. Then
\begin{multline*}
(2g+n-1)(2g+n-2)\langle\prod_{j=1}^n\tau_{d_j}\rangle_g\\
=\frac{2d_1+3}{12}\langle\tau_0^4\tau_{d_1+1}\prod_{j=2}^n\tau_{d_j}\rangle_{g-1}-\frac{2g+n-1}{6}\langle\tau_0^3\prod_{j=1}^n\tau_{d_j}\rangle_{g-1}\\
+\sum_{\{2,\dots,n\}=I\coprod
J}(2d_1+3)\langle\tau_{d_1+1}\tau_0^2\prod_{i\in
I}\tau_{d_i}\rangle_{g'}\langle\tau_0^2\prod_{i\in
J}\tau_{d_i}\rangle_{g-g'}\\
-\sum_{\{2,\dots,n\}=I\coprod
J}(2g+n-1)\langle\tau_{d_1}\tau_0\prod_{i\in
I}\tau_{d_i}\rangle_{g'}\langle\tau_0^2\prod_{i\in
J}\tau_{d_i}\rangle_{g-g'}.
\end{multline*}
It's not difficult to see that when indices $d_j\geq1$, all non-zero
intesection indices on the right hands have genera strictly less
than $g$.
\end{theorem}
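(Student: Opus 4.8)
The plan is to deduce the formula from two incarnations of the KdV hierarchy for the Witten--Kontsevich potential $F$: the first KdV equation $F_{01}=\tfrac12 F_{00}^2+\tfrac1{12}F_{0000}$ (subscripts denoting $\partial/\partial t_i$), and Witten's KdV coefficient equation, the Lenard-type recursion
\[
(2m+1)\langle\langle\tau_m\tau_0^2\rangle\rangle=\tfrac14\langle\langle\tau_{m-1}\tau_0^4\rangle\rangle+2\langle\langle\tau_0^2\rangle\rangle\langle\langle\tau_{m-1}\tau_0^2\rangle\rangle+\langle\langle\tau_0^3\rangle\rangle\langle\langle\tau_{m-1}\tau_0\rangle\rangle,
\]
supplemented by the string and dilaton equations; the $n$-point function formula of \cite{LX} is the device that turns these generating-function identities into statements about fixed intersection indices. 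Here $\langle\langle\cdots\rangle\rangle$ denotes derivatives of $F$ and $\langle\cdots\rangle_g$ a genus-$g$ index.

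First I would dispose of the prefactor on the left. Applying the dilaton equation twice gives $\langle\tau_1^2\prod_{j=1}^n\tau_{d_j}\rangle_g=(2g+n-1)(2g+n-2)\langle\prod_{j=1}^n\tau_{d_j}\rangle_g$, so the left side is the single index $\langle\tau_1^2\prod\tau_{d_j}\rangle_g$; this is the source of both factors $(2g+n-1)$ and $(2g+n-2)$. Next I would treat the two terms carrying the weight $(2d_1+3)$. Differentiating the first KdV equation by $\partial_{d_1+1}\prod_{j=2}^n\partial_{d_j}$ and expanding the quadratic term by Leibniz over partitions $\{2,\dots,n\}=I\coprod J$ shows that
\[
\tfrac1{12}\langle\tau_0^4\tau_{d_1+1}\textstyle\prod_{j\geq2}\tau_{d_j}\rangle_{g-1}+\sum_{I\coprod J}\langle\tau_{d_1+1}\tau_0^2\prod_{i\in I}\tau_{d_i}\rangle_{g'}\langle\tau_0^2\prod_{i\in J}\tau_{d_i}\rangle_{g-g'}=\langle\tau_0\tau_1\tau_{d_1+1}\textstyle\prod_{j\geq2}\tau_{d_j}\rangle_g,
\]
and one further dilaton equation collapses the right side to $(2g+n-1)\langle\tau_0\tau_{d_1+1}\prod_{j\geq2}\tau_{d_j}\rangle_g$.

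After these reductions, grouping the terms by their weights $(2d_1+3)$ and $(2g+n-1)$ and cancelling one factor of $(2g+n-1)$, the theorem becomes the single genus-$g$ identity
\[
(2d_1+3)\langle\tau_0\tau_{d_1+1}\textstyle\prod_{j\geq2}\tau_{d_j}\rangle_g=(2g+n-2)\langle\textstyle\prod_{j=1}^n\tau_{d_j}\rangle_g+\tfrac16\langle\tau_0^3\textstyle\prod_{j=1}^n\tau_{d_j}\rangle_{g-1}+\sum_{I\coprod J}\langle\tau_{d_1}\tau_0\prod_{i\in I}\tau_{d_i}\rangle_{g'}\langle\tau_0^2\prod_{i\in J}\tau_{d_i}\rangle_{g-g'},
\]
which is the real content. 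I would establish this by feeding the KdV coefficient equation with $m=d_1+1$ into the $n$-point function formula: its two product terms supply the sum over $I\coprod J$ with its genus splitting, its dispersive term $\tfrac14\langle\langle\tau_{d_1}\tau_0^4\rangle\rangle$ supplies the genus-$(g-1)$ term, and the string equation reconciles the $\tau_0$-counts and produces the bare term $(2g+n-2)\langle\prod\tau_{d_j}\rangle_g$. It is worth noting that the pure-$\tau_0$ shape of the coefficient equation is exactly what keeps the right side free of the index-raising cross terms $\langle\tau_{d_1+d_j}\cdots\rangle_g$ that a direct Virasoro/DVV recursion would generate; this is why that equation, rather than DVV, is the correct engine.

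The hard part will be precisely this last identity. Both sides are dimensionally ``stuck'': a naive differentiation of the coefficient equation by $\prod_{j\geq2}\partial_{d_j}$ sends every term to a fractional, hence vanishing, genus, so the genus-$g$ content is reachable only after the string equation has traded one factor of $\tau_0$ for an index shift. Carried out term by term, that trade spawns a cloud of cross terms $\langle\tau_1\tau_{d_1+1}\tau_{d_j-1}\cdots\rangle_g$ which are absent from the final formula; the purpose of routing the computation through the closed-form $n$-point function is that these cross terms are organized so as to cancel automatically, and the exact rational coefficients $\tfrac1{12}$, $\tfrac16$ together with the integer weights $(2d_1+3)$, $(2g+n-1)$ drop out of one coefficient comparison rather than a long chain of substitutions. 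Verifying that cancellation, and checking that no low-genus, few-point boundary term is mishandled, is where essentially all the work lies.
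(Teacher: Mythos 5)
Your reduction is sound and in fact reproduces the skeleton of the paper's own argument. The identity you extract from the first KdV equation $F_{01}=\tfrac12F_{00}^2+\tfrac1{12}F_{0000}$ together with the dilaton equation is precisely the paper's Theorem 1.2 applied to the index set $\{d_1+1,d_2,\dots,d_n\}$; the paper derives Theorem 1.2 instead from its $n$-point function formula, so your route to that ingredient is a genuinely more elementary alternative. Grouping the two $(2d_1+3)$-weighted terms by that identity and cancelling a factor of $2g+n-1$ does reduce Theorem 1.1 to exactly the residual genus-$g$ recursion you display, which is the same (after relabelling) as the relation the paper isolates,
\begin{multline*}
(2g+n-1)\langle\tau_r\prod_{j=1}^n\tau_{d_j}\rangle_g=(2r+3)\langle\tau_0\tau_{r+1}\prod_{j=1}^n\tau_{d_j}\rangle_g\\
-\frac{1}{6}\langle\tau_0^3\tau_r\prod_{j=1}^n\tau_{d_j}\rangle_{g-1}
-\sum_{\underline{n}=I\coprod J}\langle\tau_0\tau_r\prod_{i\in
I}\tau_{d_i}\rangle_{g'}\langle\tau_0^2\prod_{i\in
J}\tau_{d_i}\rangle_{g-g'},
\end{multline*}
before recasting it as its Proposition 2.6. (Your opening dilaton step rewriting the left side as $\langle\tau_1^2\prod\tau_{d_j}\rangle_g$ is correct but is never used afterwards.)

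The gap is that this residual identity --- which you yourself call ``the real content'' and ``where essentially all the work lies'' --- is never actually proved. You correctly observe that differentiating Witten's coefficient equation with $m=d_1+1$ by $\prod_{j\ge2}\partial_{d_j}$ lands every term in fractional genus and so yields $0=0$; the repair you propose is to trade $\tau_0$'s via the string equation term by term and to hope that the resulting cross terms $\langle\tau_{d_1+1}\tau_{d_j-1}\cdots\rangle_g$ cancel, with the cancellation delegated to ``routing the computation through the $n$-point function'' --- but you do not say how that routing is performed or verify the cancellation. This is exactly the point at which the paper introduces its Lemma 2.5: Witten's coefficient equation is first reformulated as an identity of $(n+1)$-point generating functions in which every correlator appears pre-multiplied by powers of $y+\sum_{i\in I}x_i$, so that all string-equation shifts and cross terms are absorbed into polynomial prefactors once and for all. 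Given that reformulation, the residual identity (Proposition 2.6) follows by isolating the $\partial/\partial y$-part of Lemma 2.5, substituting it into Proposition 2.6 multiplied by $y+\sum_j x_j$, and recognizing Proposition 2.4 (i.e.\ Theorem 1.2 again) in what remains --- a short algebraic manipulation with no term-by-term cancellation left to check. Some such device is indispensable here; without Lemma 2.5 or an equivalent bookkeeping mechanism, your final step is a statement of intent rather than a proof.
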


It is not difficult to see that the above recursion formula,
together with the string and dilaton equations, provides an
effective recursive algorithm for computing intersection indices on
moduli spaces of curves by inducting solely on genus $g$. We have
written a Maple program implementing the above recursion formula to
compute intersection indices which is available at \cite{Maple}.

Besides our $n$-point function formula and the above new recursion
formula, the only known effective formula for computing intersection
indices is the following DVV formula \cite{Dij, DVV} (equivalent to
Virasoro constraints)
\begin{multline*}
\langle\tau_{k+1}\tau_{d_1}\cdots\tau_{d_n}\rangle_g=\frac{1}{(2k+3)!!}\left[\sum_{j=1}^n
\frac{(2k+2d_j+1)!!}{(2d_j-1)!!}\langle\tau_{d_1}\cdots
\tau_{d_{j}+k}\cdots\tau_{d_n}\rangle_g\right.\\
+\frac{1}{2}\sum_{r+s=k-1}
(2r+1)!!(2s+1)!!\langle\tau_r\tau_s\tau_{d_1}\cdots\tau_{d_n}\rangle_{g-1}\\
\left.+\frac{1}{2}\sum_{r+s=k-1} (2r+1)!!(2s+1)!!
\sum_{\underline{n}=I\coprod J}\langle\tau_r\prod_{i\in
I}\tau_{d_i}\rangle_{g'}\langle\tau_s\prod_{i\in
J}\tau_{d_i}\rangle_{g-g'}\right]
\end{multline*}
which computes intersection indices by inducting on both the genus
and the number of marked points. We know that Mirzakhani's recursion
formula \cite{Mir} of Weil-Petersson volumes is essentially
equivalent to the DVV formula \cite{LX2, MS}.

We also found the following simple identity, which plays a key role
in the proof of Theorem 1.1.
\begin{theorem}
Let $d_j\geq 0$ and $\sum_{j=1}^n d_j=3g+n-2$. Then
$$(2g+n-1)\langle\tau_0\prod_{j=1}^n\tau_{d_j}\rangle_g=\frac{1}{12}\langle\tau_0^4\prod_{j=1}^n\tau_{d_j}\rangle_{g-1}
+\frac{1}{2}\sum_{\underline{n}=I\coprod
J}\langle\tau_0^2\prod_{i\in
I}\tau_{d_i}\rangle_{g'}\langle\tau_0^2\prod_{i\in
J}\tau_{d_i}\rangle_{g-g'}.$$
\end{theorem}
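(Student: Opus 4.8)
The plan is to recognize the asserted identity as the coefficientwise expansion of the first nontrivial equation of the KdV hierarchy, integrated once in $t_0$. Introduce the free energy
\[
F(t_0,t_1,\dots)=\sum_{g\geq 0}\ \sum_{k_0,k_1,\dots}\Big\langle\prod_{i\geq 0}\tau_i^{k_i}\Big\rangle_g\ \prod_{i\geq 0}\frac{t_i^{k_i}}{k_i!},
\]
write $\langle\langle\tau_{a_1}\cdots\tau_{a_m}\rangle\rangle:=\partial_{t_{a_1}}\cdots\partial_{t_{a_m}}F$, and set $U:=\partial_{t_0}^2F$. By the Witten--Kontsevich theorem, $U$ satisfies the first KdV equation, which in Witten's normalization reads $\partial_{t_1}U=\tfrac12\partial_{t_0}(U^2)+\tfrac1{12}\partial_{t_0}^3U$. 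First I would integrate this once with respect to $t_0$ to obtain
\[
\partial_{t_0}\partial_{t_1}F=\tfrac12\big(\partial_{t_0}^2F\big)^2+\tfrac1{12}\partial_{t_0}^4F .
\]

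The core of the argument is then to extract coefficients. Applying the differential operator $\prod_{j=1}^n\partial_{t_{d_j}}$ to both sides and setting all $t_i=0$, the left-hand side becomes $\langle\tau_1\tau_0\prod_{j}\tau_{d_j}\rangle_g$, where the dimension constraint $\sum_j d_j=3g+n-2$ selects the single contributing genus $g$. Here I would invoke the dilaton equation on the $(n+1)$-pointed correlator $\langle\tau_0\prod_j\tau_{d_j}\rangle_g$ to rewrite this as $(2g+n-1)\langle\tau_0\prod_j\tau_{d_j}\rangle_g$, producing the left-hand side of the theorem. On the right, the term $\tfrac1{12}\partial_{t_0}^4F$ yields $\tfrac1{12}\langle\tau_0^4\prod_j\tau_{d_j}\rangle_{g-1}$, the shift to genus $g-1$ being forced by the dimension count for a correlator carrying four extra $\tau_0$ insertions.

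The quadratic term requires the Leibniz rule: $\prod_{j=1}^n\partial_{t_{d_j}}\big[(\partial_{t_0}^2F)^2\big]$, evaluated at $t=0$, expands as $\sum_{I\coprod J}\langle\tau_0^2\prod_{i\in I}\tau_{d_i}\rangle_{g'}\langle\tau_0^2\prod_{i\in J}\tau_{d_i}\rangle_{g-g'}$ over ordered set-partitions $\{1,\dots,n\}=I\coprod J$, with the genus automatically splitting as $g=g'+(g-g')$ according to the dimension constraint in each factor; together with the prefactor this is exactly the last term of the statement. Matching all three pieces completes the proof.

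I expect the main obstacle to be the bookkeeping around the integration step rather than any deep geometry: one must show that the $t_0$-integration constant vanishes, which I would settle by examining the lowest-order terms, or equivalently by using the string equation that pins down $\partial_{t_0}F$. One must also confirm that evaluating at $t=0$ together with the degree grading isolates precisely the claimed genera $g$, $g-1$, and $g'$, $g-g'$. Verifying that the coefficient emerging from the cubic KdV term is $\tfrac1{12}$ and not some other normalization-dependent constant is the one place where I would double-check the conventions defining $F$.
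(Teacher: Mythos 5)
Your route is genuinely different from the paper's. For this statement the paper does not touch the KdV equation at all: it restates the identity as an identity of $n$-point functions (Proposition 2.4) and derives it from the closed formula for the normalized $n$-point function of [LX], specifically from Lemma 2.3, $G_g=\tfrac{1}{2g+n-1}P_g+\tfrac{\Delta}{4(2g+n-1)}G_{g-1}$, via a computation that juggles the exponential normalization $\exp(\pm\sum_j x_j^3/24)$; the KdV coefficient equation is reserved for Theorem 1.1. You instead read the statement as the once-integrated first KdV equation $\partial_{t_0}\partial_{t_1}F=\tfrac12(\partial_{t_0}^2F)^2+\tfrac1{12}\partial_{t_0}^4F$ combined with the dilaton equation $\langle\tau_1\tau_0\prod_j\tau_{d_j}\rangle_g=(2g+n-1)\langle\tau_0\prod_j\tau_{d_j}\rangle_g$. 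That reading is correct: the coefficient extraction, the dimension counts forcing the genera $g$, $g-1$ and $g'+(g-g')=g$, and the Leibniz expansion of the quadratic term over ordered partitions with the $\tfrac12$ prefactor all check out. Your argument is shorter and needs none of the $n$-point-function machinery; what it gives up is the paper's uniform framework in which Theorems 1.1 and 1.2 are companion computations with the same objects $G_g$, $P_g$, $\Delta$.

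The one genuine gap is the integration constant, and your proposed fixes do not suffice as stated. The quantity $c(t_1,t_2,\dots)=\partial_{t_0}\partial_{t_1}F-\tfrac12(\partial_{t_0}^2F)^2-\tfrac1{12}\partial_{t_0}^4F$ is a formal series in infinitely many variables, so ``examining the lowest-order terms'' cannot show it vanishes identically; worse, the coefficient of $t_{d_1}\cdots t_{d_n}$ in $c$ with all $d_j\geq1$ is exactly the discrepancy that the theorem asserts to be zero in those cases, so proving $c=0$ coefficient by coefficient is circular precisely where it is needed. (When some $d_j=0$ your argument is already complete, since the relevant monomial contains $t_0$ and the differentiated KdV equation suffices.) The clean repair is to invoke the Witten--Kontsevich theorem in its integrated Gelfand--Dikii form, $\langle\langle\tau_0\tau_1\rangle\rangle=\tfrac12\langle\langle\tau_0\tau_0\rangle\rangle^2+\tfrac1{12}\langle\langle\tau_0^4\rangle\rangle$, where the constant is fixed once and for all by the normalization $R_2=\tfrac12U^2+\tfrac1{12}U''$ with vanishing constant term --- this is how Witten states the conjecture, so citing it in that form is legitimate. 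Alternatively, an induction on $\sum_j d_j$ that uses the string equation to pass from the coefficient of $t_0t_{d_1}\cdots t_{d_n}$ (which you do control) down to that of $t_{d_1}\cdots t_{d_n}$ would close the loop, but that argument has to be written out; it is not a one-line observation.
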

In fact, Theorem 1.2 is also very suitable for computing
intersection indices. Note that the non-zero intersection indices on
the right hand side have strictly lower genus. We may then compute
inductively on the maximum index, say $d_1$, and use the string
equation
$$\langle\tau_{d_1}\prod_{j=2}^n\tau_{d_j}\rangle_g=
\langle\tau_0\tau_{d_1+1}\prod_{j=2}^n\tau_{d_j}\rangle_g-\sum_{i=2}^n\langle\tau_{d_1+1}\tau_{d_i-1}\prod_{j\neq
i,1}\tau_{d_j}\rangle_g.$$

Both Theorem 1.1 and 1.2 are proved by applying our $n$-point
function formula and Witten's KdV coefficient equation.

Theorem 1.1 tells us that the intersection numbers on moduli spaces
of curves are determined by intersection numbers on the boundaries.
On the other hand, a theorem of Ionel \cite{Ion} says when $g\geq
2$, any product of degree at least $g$ of descendant or tautological
classes vanishes when restricted to $\sM_{g,n}$.

\vskip 30pt
\section{Proof of Theorems 1.1 and 1.2}

\begin{definition}
We call the following generating function
$$F(x_1,\dots,x_n)=\sum_{g=0}^{\infty}\sum_{\sum d_j=3g-3+n}\langle\tau_{d_1}\cdots\tau_{d_n}\rangle_g\prod_{j=1}^n x_j^{d_j}$$
the $n$-point function.
\end{definition}

Consider the following ``normalized'' $n$-point function
$$G(x_1,\dots,x_n)=\exp\left(\frac{-\sum_{j=1}^n
x_j^3}{24}\right)\cdot F(x_1,\dots,x_n).$$ We have the following
simple recursion formula of $n$-point functions.
\begin{theorem} \cite{LX} For $n\geq2$,
\begin{equation*}
G(x_1,\dots,x_n)=\sum_{r,s\geq0}\frac{(2r+n-3)!!}{4^s(2r+2s+n-1)!!}P_r(x_1,\dots,x_n)\Delta(x_1,\dots,x_n)^s,
\end{equation*}
where $P_r$ and $\Delta$ are homogeneous symmetric polynomials
defined by
\begin{align*}
\Delta(x_1,\dots,x_n)&=\frac{(\sum_{j=1}^nx_j)^3-\sum_{j=1}^nx_j^3}{3},\nonumber\\
P_r(x_1,\dots,x_n)&=\left(\frac{1}{2\sum_{j=1}^nx_j}\sum_{\underline{n}=I\coprod
J}(\sum_{i\in I}x_i)^2 (\sum_{i\in J}x_i)^2 G(x_I)
G(x_J)\right)_{3r+n-3}\nonumber\\
&=\frac{1}{2\sum_{j=1}^nx_j}\sum_{\underline{n}=I\coprod
J}(\sum_{i\in I}x_i)^2(\sum_{i\in J}x_i)^2\sum_{r'=0}^r
G_{r'}(x_I)G_{r-r'}(x_J),
\end{align*}
where $I,J\ne\emptyset$, $\underline{n}=\{1,2,\ldots,n\}$ and
$G_g(x_I)$ denotes the degree $3g+|I|-3$ homogeneous component of
the normalized $|I|$-point function $G(x_{k_1},\dots,x_{k_{|I|}})$,
where $k_j\in I$.
\end{theorem}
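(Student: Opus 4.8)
The plan is to read the asserted identity not as a formula to be guessed but as the explicit solution of a one-step recursion in the genus, and then to produce that recursion from the Virasoro constraints (the DVV formula) after passing to the normalized function $G$. First I would sort the right-hand side by homogeneous degree. Since $\Delta$ has degree $3$ and $P_r$ has degree $3r+n-3$, the summand $P_r\Delta^s$ lies in degree $3(r+s)+n-3$, so the genus-$g$ part $G_{n,g}$ is assembled from the diagonal $r+s=g$, on which the denominator $(2r+2s+n-1)!!$ equals the constant $(2g+n-1)!!$. Hence the theorem is equivalent to
\[
(2g+n-1)!!\,G_{n,g}=\sum_{r=0}^{g}\frac{(2r+n-3)!!}{4^{\,g-r}}\,P_r\,\Delta^{\,g-r}.
\]
Subtracting $\tfrac14\Delta$ times the same identity at genus $g-1$ isolates the top term $r=g$ and leaves the clean one-step recursion
\[
(2g+n-1)\,G_{n,g}=\tfrac14\,\Delta\,G_{n,g-1}+P_g\qquad(g\ge1),
\]
with base relation $(n-1)G_{n,0}=P_0$. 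Writing $\mathcal E=\sum_j x_j\partial_{x_j}$ and using $\mathcal E G_{n,g}=(3g+n-3)G_{n,g}$, this is the single first-order equation $\tfrac23\mathcal E G+\tfrac{n+3}{3}G-\tfrac14\Delta\,G=P$, with source $P=\sum_r P_r$ built from the lower-point functions $G(x_I),G(x_J)$. The two reformulations are reversible, so it suffices to prove the recursion; the closed form is then recovered by telescoping genus by genus.

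The heart of the argument is to derive this recursion from the DVV formula. I would single out the variable $x_1$, multiply the DVV relation for $\langle\tau_{k+1}\tau_{d_2}\cdots\tau_{d_n}\rangle$ by $x_1^{k+1}\prod_{j\ge2}x_j^{d_j}$, and sum over $k$ and over all indices. The three blocks of DVV become, in order, a double-factorial weighted shift operator in $x_1$ applied to $F$, a genus-lowering contraction of two variables (the source of the $\Delta$ term), and the convolution $\sum_{\{1,\dots,n\}=I\coprod J}G(x_I)G(x_J)$ over partitions (the source of $P_g$). The decisive manipulation is the substitution $F=\exp(\sum_j x_j^3/24)\,G$ together with $\sum_j x_j^3=(\sum_j x_j)^3-3\Delta$: the cubic exponential is tailored so that, on the degree-$(3g+n-3)$ component, the unwieldy shift operator from the first block collapses to multiplication by the scalar $2g+n-1$, the genus-lowering block becomes $\tfrac14\Delta\,G_{n,g-1}$, and the partition block reorganizes into $P_g$, the weight $(\sum_{i\in I}x_i)^2(\sum_{i\in J}x_i)^2/(2\sum_j x_j)$ being precisely what the normalization extracts from the two node factors. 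One point to fix at the outset is the base data for the induction in $n$: the small building blocks $G(x_I)$ with $|I|\le2$ must carry their genus-zero unstable pieces (the one-point seed $x^{-2}$ and the two-point seed $(x_1+x_2)^{-1}$), since these make $P_0$ and the low-genus contributions nonzero and are the values forced by the KdV normalization.

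With the recursion in hand the theorem follows by a double induction: on $n$, because $P_g$ involves only strictly-lower-point functions, and on $g$ for fixed $n$, because the recursion expresses $G_{n,g}$ through $G_{n,g-1}$ and $P_g$; the telescoping above, read forward, reproduces the coefficients $(2r+n-3)!!/(4^{g-r}(2g+n-1)!!)$ exactly. The step I expect to be the main obstacle is the collapse of the first DVV block to the scalar $2g+n-1$: one must show that the operator assembling $\sum_j (2k+2d_j+1)!!/(2d_j-1)!!$, weighted by $1/(2k+3)!!$ and summed against $x_1^{k+1}$, acts after conjugation by $\exp(\sum_j x_j^3/24)$ as a pure scalar on each homogeneous piece. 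This is a delicate but self-contained generating-function identity among double factorials, and it is the only place where the specific normalization $\exp(-\sum_j x_j^3/24)$ and the definition of $\Delta$ enter essentially; the rest is bookkeeping of degrees, genera, and partitions.
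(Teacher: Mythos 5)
First, note that the paper does not actually prove this statement: Theorem 2.2 is quoted from \cite{LX}, so there is no internal proof to compare against. Your opening reduction is nevertheless correct and valuable: sorting the right-hand side by homogeneous degree, the genus-$g$ component comes from the diagonal $r+s=g$, and subtracting $\tfrac14\Delta$ times the genus-$(g-1)$ identity yields exactly the one-step recursion $(2g+n-1)G_g=\tfrac14\Delta G_{g-1}+P_g$ --- which is precisely the paper's Lemma 2.3 (also quoted from \cite{LX}). Your observation that the unstable seeds $G_0(x)=x^{-2}$ and $G_0(x_1,x_2)=(x_1+x_2)^{-1}$ must be included for $P_0$ to come out right is also correct and is a genuine subtlety glossed over by Definition 2.1.

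The gap is in the second half, where you derive the recursion from the DVV formula. The structure of DVV does not match the recursion you need. Its genus-lowering block is $\tfrac12\sum_{r+s=k-1}(2r+1)!!(2s+1)!!\langle\tau_r\tau_s\tau_{d_1}\cdots\tau_{d_n}\rangle_{g-1}$, an $(n+2)$-point correlator with two \emph{new} insertions whose indices $r,s$ are convolved against the auxiliary variable; it is not a ``contraction of two variables'' and does not reorganize into $\tfrac14\Delta(x_1,\dots,x_n)\,G_{g-1}(x_1,\dots,x_n)$ in the same $n$ variables. Likewise the first block $\sum_j\frac{(2k+2d_j+1)!!}{(2d_j-1)!!}\langle\cdots\tau_{d_j+k}\cdots\rangle_g$ stays at genus $g$ and shifts one of the existing exponents by $k$; after summing against $x_1^{k+1}$ it is a two-variable convolution operator mixing $x_1$ with each $x_j$, and there is no reason it should act as the scalar $2g+n-1$ on homogeneous components, even after conjugation by $\exp(\sum_j x_j^3/24)$ --- this is exactly the step you flag as ``the main obstacle,'' and it is not a bookkeeping identity but the place where the argument, as set up, fails. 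The input that actually produces the recursion is not DVV but Witten's KdV coefficient equation (the paper's Lemma 2.5), whose genus-lowering term carries only $\tau_0$ insertions, removable by the string equation into polynomial factors $(y+\sum_j x_j)^k$ in the \emph{same} variables; that is what generates the $\Delta$ term and keeps $n$ fixed. To complete your proof you would need to replace the DVV step by the KdV coefficient equation (or genuinely prove the claimed double-factorial collapse, which I do not believe holds in the form stated).
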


We also need the following lemma.
\begin{lemma} \cite{LX} Let $P$ and $\Delta$ be as defined in Theorem
2.2. Then
$$G_g(x_1,\dots,x_n)=
\frac{1}{(2g+n-1)}P_g(x_1,\dots,x_n)+\frac{\Delta(x_1,\dots,x_n)}{4(2g+n-1)}G_{g-1}(x_1,\dots,x_n).$$
\end{lemma}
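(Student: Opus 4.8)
The plan is to obtain the lemma by extracting a single homogeneous component from the closed formula of Theorem~2.2. First I would record the degrees of the two building blocks: $\Delta$ is homogeneous of degree $3$, and by its very definition $P_r$ is homogeneous of degree $3r+n-3$. Hence each summand $P_r\Delta^s$ appearing in Theorem~2.2 is homogeneous of degree $3(r+s)+n-3$, so the degree $3g+n-3$ part of $G$ — which is by definition $G_g$ — collects exactly the terms with $r+s=g$.

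Carrying out this extraction and setting $r=g-s$ yields
$$G_g=\sum_{s=0}^{g}\frac{(2g-2s+n-3)!!}{4^s\,(2g+n-1)!!}\,P_{g-s}\,\Delta^s.$$
The strategy is then to peel off the $s=0$ term and reindex the remainder. The $s=0$ term equals $\frac{(2g+n-3)!!}{(2g+n-1)!!}P_g$, which collapses to $\frac{1}{2g+n-1}P_g$ via the elementary identity $(2g+n-1)!!=(2g+n-1)(2g+n-3)!!$; this already produces the first term on the right-hand side of the claim.

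For the remaining sum I would substitute $s=t+1$ with $t$ ranging over $0,\dots,g-1$, so that $P_{g-s}=P_{g-1-t}$ and one factor of $\Delta/4$ can be pulled out front. After extracting $1/(2g+n-1)$ using the same double-factorial identity, the leftover sum is
$$\sum_{t=0}^{g-1}\frac{(2g-2t+n-5)!!}{4^{t}\,(2g+n-3)!!}\,P_{g-1-t}\,\Delta^{t},$$
which is precisely the expression obtained by specializing the formula for $G_g$ to $g-1$; that is, it equals $G_{g-1}$. Combining the two pieces gives the asserted recursion.

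The computation is entirely mechanical, so I do not expect a genuine conceptual obstacle. The one place to exercise care is the bookkeeping of the double factorials under reindexing: one must verify that the shifted summand matches the definition of $G_{g-1}$ term-by-term, and not merely up to an overall constant. A secondary point worth confirming is that $P_g$ in the statement is the \emph{full} homogeneous polynomial of degree $3g+n-3$ rather than a truncation, which is guaranteed by the definition of $P_r$ in Theorem~2.2.
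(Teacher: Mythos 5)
Your computation is correct: extracting the degree $3g+n-3$ component of the closed formula in Theorem~2.2 forces $r+s=g$, the $s=0$ term gives $\tfrac{1}{2g+n-1}P_g$, and after the shift $s=t+1$ the denominator $(2g+n-1)!!=(2g+n-1)(2g+n-3)!!$ leaves exactly the sum defining $G_{g-1}$ (this uses that $P_r$ and $\Delta$ depend only on $n$ and the variables, not on $g$, as you note). The double-factorial bookkeeping checks out term by term. The one caveat is logical rather than computational: this paper gives no proof of Lemma~2.3 at all — both Theorem~2.2 and Lemma~2.3 are quoted from \cite{LX} — and in that reference the order of deduction is the reverse of yours: the recursion of Lemma~2.3 is what is actually established first (from the Witten--Kontsevich theorem via the KdV coefficient equations), and the closed formula of Theorem~2.2 is then obtained by iterating it, exactly as your algebra shows the two are equivalent. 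So your argument is a valid derivation of the lemma \emph{given} Theorem~2.2 as a black box, and a useful consistency check, but it could not serve as an independent proof from first principles without being circular; if you wanted one, you would need to derive the recursion directly from the KdV/Virasoro structure as in \cite{LX}.
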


In terms of $n$-point functions, it's not difficult to see that
Theorem 1.2 can be rephrased as the following proposition.
\begin{proposition} Let $F(x_1,\dots,x_n)$ be $n$-point functions.
Then
\begin{multline*}
\sum_{g=0}^\infty (2g+n-1)\left(\sum_{i=1}^n x_i\right)F_g(x_1,\dots,x_n)=\frac{1}{12}\left(\sum_{i=1}^n x_i\right)^4F(x_1,\dots,x_n)\\
+\frac{1}{2}\sum_{\underline{n}=I\coprod J} \left(\sum_{i\in I}
x_i\right)^2\left(\sum_{i\in J} x_i\right)^2 F(x_I)F(x_J).
\end{multline*}
\end{proposition}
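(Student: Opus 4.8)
The plan is to prove Proposition 2.4 by substituting the explicit expansion of the normalized $n$-point function from Theorem 2.2 and Lemma 2.3 into both sides, and then comparing the resulting series in the polynomials $P_r$ and $\Delta$ degree by degree. Let me think carefully about the structure before committing to a strategy.

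Let me set $s = \sum_{i=1}^n x_i$ for brevity. The proposition relates $F$ to itself and to its factorizations. Since $F = \exp\left(\frac{\sum x_j^3}{24}\right) G$, and the cubic exponential factor is multiplicative under the disjoint-union factorization, I'd first try to rewrite the whole identity in terms of $G$ rather than $F$. The key observation is that $\sum_{i \in I} x_i^3 + \sum_{i \in J} x_i^3 = \sum_{j=1}^n x_j^3$, so the exponential prefactor $\exp(-\sum x_j^3/24)$ factors across any partition $I \coprod J$. This means the quadratic-product term $\sum_I (\sum_I x_i)^2 (\sum_J x_i)^2 F(x_I) F(x_J)$, after dividing through by $\exp(\sum x_j^3/24)$, becomes exactly the sum appearing inside the definition of $P_r$ in Theorem 2.2 — indeed $\frac{1}{2s}\sum_I (\sum_I x_i)^2(\sum_J x_i)^2 G(x_I)G(x_J)$ is the generating function whose degree-$(3r+n-3)$ part is $P_r$. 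So the plan is: multiply the whole proposition through by $\exp(-\sum x_j^3/24)$ to convert every $F$ into $G$, and recognize the second term on the right as $s \cdot \sum_r P_r$.

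\medskip

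The main computation then proceeds as follows. First I would verify that the exponential conversion is legitimate term-by-term — that is, that the grading by total homogeneous degree is preserved, so that one may extract the degree-$(3g+n-2)$ homogeneous component of each side and match genus $g$ on the left with the appropriate contributions on the right. After converting, the proposition becomes an identity purely among $G$, $P$, and $\Delta$: schematically,
\begin{equation*}
\sum_g (2g+n-1)\, s\, G_g = \frac{1}{12} s^4 G + s \sum_r P_r,
\end{equation*}
where I have used that $\frac{1}{12}(\sum x_i)^4 F$ converts to $\frac{1}{12} s^4 G$ and the product term converts to $s \sum_r P_r$ (so the factor $\tfrac12$ is absorbed into the definition of $P_r$). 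The heart of the argument is then Lemma 2.3, which gives the recursion $(2g+n-1) G_g = P_g + \frac{\Delta}{4} G_{g-1}$. Multiplying by $s$ and summing over $g$ yields $\sum_g (2g+n-1) s\, G_g = s \sum_g P_g + \frac{s\Delta}{4} \sum_g G_{g-1} = s \sum_g P_g + \frac{s \Delta}{4} G$. Comparing with the target identity, it therefore suffices to show the purely polynomial relation $\frac{s\Delta}{4} = \frac{1}{12} s^4$, equivalently $\Delta = \frac{s^3}{3}$. But this is false in general, so the real content must be $\frac{s\Delta}{4} G = \frac{1}{12} s^4 G$ only after accounting for the genus shift, and I expect the precise bookkeeping of this discrepancy to be the main obstacle.

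\medskip

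Resolving that discrepancy is where I would focus the most care. The quantity $\frac{1}{12} s^4 - \frac{s\Delta}{4} = \frac{1}{12} s^4 - \frac{s}{4}\cdot\frac{s^3 - \sum x_j^3}{3} = \frac{1}{12} s^4 - \frac{s^4}{12} + \frac{s \sum x_j^3}{12} = \frac{s \sum_j x_j^3}{12}$ is not identically zero; the leftover term $\frac{s}{12}\sum_j x_j^3 \cdot G$ is precisely what records the difference between $F$ and $G$ upon differentiating the exponential factor. So the exponential conversion cannot be a mere relabeling: applying $\partial$-type operations or multiplying by $s$ interacts nontrivially with $\exp(\sum x_j^3/24)$. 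I would therefore be more careful and \emph{not} strip off the exponential naively, but instead track how the factor $\frac{1}{12} s^4 F$ versus $\frac{s\Delta}{4}G$-type terms combine, verifying that the cubic correction $\frac{s}{12}\sum x_j^3$ exactly cancels against the contribution arising from expressing $F_g = \sum_k \frac{1}{24^k k!}(\sum x_j^3)^k G_{g-k}$ in the left-hand sum $\sum_g (2g+n-1) s F_g$. The anticipated main obstacle is thus the combinatorial identity reconciling the $\sum x_j^3$ correction terms across the different genera; I expect it to follow from Lemma 2.3 applied uniformly in $g$ together with the elementary polynomial identity $\Delta = \frac{1}{3}(s^3 - \sum x_j^3)$, but organizing the genus-shifted sums so that everything telescopes cleanly will require the most attention.
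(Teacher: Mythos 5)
Your proposal is correct and follows essentially the same route as the paper: strip off the exponential factor to pass from $F$ to $G$, identify the splitting term with $s\sum_r P_r$, apply Lemma 2.3, and observe that the leftover $\frac{1}{12}s^4-\frac{s\Delta}{4}=\frac{s}{12}\sum_j x_j^3$ is exactly matched by the correction coming from weighting $F_g=\sum_k H_k G_{g-k}$ by $2g+n-1=(2(g-k)+n-1)+2k$ on the left-hand side. The paper carries out this last bookkeeping precisely as you anticipate, via the identity $\sum_{b+c=d}H_bH_c^{-1}=0$ for $d>0$ and $2bH_b=\frac{1}{12}\bigl(\sum_j x_j^3\bigr)H_{b-1}$.
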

\begin{proof}
For convenience of notation, we define
$$H=\exp\left(\frac{\sum_{i=1}^n
x_i^3}{24}\right),\qquad H^{-1}=\exp\left(\frac{-\sum_{i=1}^n
x_i^3}{24}\right),$$
$$H_d=\frac{1}{d!}\left(\frac{\sum_{i=1}^n x_i^3}{24}\right)^d,\qquad H^{-1}_d=\frac{1}{d!}\left(\frac{-\sum_{i=1}^n x_i^3}{24}\right)^d.$$
Note that $\sum_{i=1}^d H_iH^{-1}_{d-i}=0$ if $d>0$. We have
\begin{multline*}
\frac{H^{-1}\cdot RHS}{\sum_{i=1}^n x_i}=\sum_{g=0}^\infty\left(\frac{1}{12}\left(\sum_{i=1}^n x_i\right)^3 G_{g-1}(x_1,\dots,x_n)+P_g(x_1,\dots,x_n)\right)\\
=\sum_{g=0}^\infty
\left((2g+n-1)G_g(x_1,\dots,x_n)+\frac{1}{12}\left(\sum_{i=1}^n
x_i^3\right) G_{g-1}(x_1,\dots,x_n)\right)
\end{multline*}
where we applied Lemma 2.3 in the second equation.

\begin{multline*}
\frac{H^{-1}\cdot LHS}{\sum_{i=1}^n x_i}=\sum_{g=0}^\infty\sum_{a+b+c=g}(2a+2b+n-1)G_a(x_1,\dots,x_n)H_b H^{-1}_c\\
=\sum_{g=0}^\infty\sum_{a=0}^g(2a+n-1)G_a(x_1,\dots,x_n)\sum_{b+c=g-a}H_b
H^{-1}_c\\
+\sum_{g=0}^\infty\sum_{a+b+c=g}G_a(x_1,\dots,x_n)2b H_b H^{-1}_c\\
=\sum_{g=0}^\infty(2g+n-1)G_g(x_1,\dots,x_n)+\sum_{g=0}^\infty\frac{1}{12}\left(\sum_{i=1}^n
x_i^3\right) G_{g-1}(x_1,\dots,x_n).
\end{multline*}
So we conclude the proof of the proposition.
\end{proof}

The following is a reformulation of Witten's KdV coefficient
equation (see \cite{LX}).
\begin{lemma} We have
\begin{multline*}
\left(2y\frac{\partial}{\partial y}+1\right)
\left(\left(y+\sum_{j=1}^n x_j\right)^2 F(y,x_1,\dots,x_n)\right)=\\
\left(\frac{y}{4}\left(y+\sum_{j=1}^n x_j\right)^4+y\left(y+\sum_{j=1}^n x_j\right)\right) F(y,x_1,\dots,x_n)\\
+y\sum_{\substack{\underline{n}=I\coprod J\\J\neq\emptyset}}
\left(\left(y+\sum_{i\in I} x_i\right)\left(\sum_{i\in J}
x_i\right)^3+2\left(y+\sum_{i\in I} x_i\right)^2\left(\sum_{i\in J}
x_i\right)^2\right) F(y,x_I)F(x_J).
\end{multline*}
\end{lemma}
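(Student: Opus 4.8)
The plan is to prove the identity by comparing coefficients of the monomials $y^{d_0}x_1^{d_1}\cdots x_n^{d_n}$ on both sides, thereby reducing it to the recursion among intersection numbers that is Witten's KdV coefficient equation (equivalently, to the DVV formula displayed above). Writing $X=\sum_{j=1}^n x_j$, the first observation I would record is that the operator $2y\,\partial/\partial y+1$ acts on $y^m$ as multiplication by $2m+1$, so it produces exactly the linear-in-index weights that occur on the distinguished side of the recursion; the special variable $y$ thus plays the role of the marked point whose index the recursion lowers.

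Next I would carry out the degree--genus bookkeeping, which is the structural backbone of the identity. The genus-$g$ part of $F(y,x_1,\dots,x_n)$ is homogeneous of degree $3g+n-2$ in its $n+1$ variables, so $(y+X)^2F$, and hence the entire left-hand side, contributes in degree $3g+n$. On the right-hand side $y(y+X)F$ stays in genus $g$, the quartic term $\tfrac{y}{4}(y+X)^4F$ lands in degree $3g+n$ only when $F$ is taken in genus $g-1$ (since $5+3(g-1)+n-2=3g+n$), and each bilinear term $y(y+X_I)X_J^3F(y,x_I)F(x_J)$ and $2y(y+X_I)^2X_J^2F(y,x_I)F(x_J)$ forces $g_1+g_2=g$ on its two factors. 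This confirms that the extracted coefficient relation has precisely KdV/Virasoro shape: genus $g$ on the left is expressed through genus $g-1$ and through products of strictly lower genus, with the set partition $\underline{n}=I\amalg J$ of the $x$-variables generating the boundary convolution terms.

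With this matching in place, I would derive the identity by starting from Witten's KdV coefficient equation in its original intersection-number form, multiplying through by the appropriate monomial in $y$ and the $x_j$, and summing over all indices and all genera; the polynomial prefactors $(y+X)^2$, $\tfrac14(y+X)^4$, $(y+X)$, and $(y+X_I)X_J^3+2(y+X_I)^2X_J^2$ then emerge from the binomial coefficients of the index shifts, while the convolution of two such sums reproduces the bilinear $F(y,x_I)F(x_J)$. Two alternatives would also be available: one may instead try to resum the DVV formula above directly, or one may expand $F(y,x)=H\cdot G(y,x)$ and feed in the explicit $n$-point recursion of Theorem 2.2 together with Lemma 2.3, since the bilinear structure of the right-hand side mirrors that of $P_r$.

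The step I expect to be the main obstacle is reconciling the double factorials $(2k+3)!!$ and $(2k+2d_j+1)!!/(2d_j-1)!!$ that are intrinsic to the Virasoro/DVV form with the purely polynomial (binomial) coefficients demanded here. Making this precise requires summing the recursion against a generating kernel in $y$ so that the double factorials telescope into powers of $(y+X)$; in particular I would expect $\tfrac14(y+X)^4$ to surface as the leading symbol of the underlying KdV differential polynomial. Once these weight identities are established, the remainder — verifying the three families of terms on the right separately and matching the set-partition bookkeeping — is routine power-series algebra.
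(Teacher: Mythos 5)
The paper contains no proof of this lemma at all: it is asserted to be ``a reformulation of Witten's KdV coefficient equation'' with a pointer to \cite{LX}, so the intended argument is precisely the coefficient-matching you outline, and your degree--genus bookkeeping (the operator $2y\partial/\partial y+1$ producing the weight $2d_0+1$; the quartic term forcing genus $g-1$; the bilinear terms forcing $g_1+g_2=g$) is correct and is the right skeleton. The difficulty is that you never commit to a starting equation, and the ``main obstacle'' you single out shows why this matters: the double factorials you worry about are a phantom. Witten's KdV coefficient equation in the relevant form is
\[
(2d_0+1)\langle\tau_{d_0}\tau_0^2\prod_{j}\tau_{d_j}\rangle_g=\frac14\langle\tau_{d_0-1}\tau_0^4\prod_{j}\tau_{d_j}\rangle_{g-1}+\sum\Bigl(\langle\tau_{d_0-1}\tau_0\prod_{i\in I}\tau_{d_i}\rangle_{g'}\langle\tau_0^3\prod_{i\in J}\tau_{d_i}\rangle_{g-g'}+2\langle\tau_{d_0-1}\tau_0^2\prod_{i\in I}\tau_{d_i}\rangle_{g'}\langle\tau_0^2\prod_{i\in J}\tau_{d_i}\rangle_{g-g'}\Bigr),
\]
which already contains exactly the coefficients $1$, $\tfrac14$, $1$, $2$ of the lemma and no double factorials whatsoever. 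The DVV formula is a different (string-equation-processed) packaging of the same constraints, and trying to resum it directly would genuinely run into the factorial mismatch you fear; that route should be discarded rather than kept as an equal alternative, and likewise deriving the lemma from Theorem 2.2 and Lemma 2.3 is backwards, since those are downstream of the KdV/Virasoro constraints.

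The second gap is that the one nontrivial mechanism in the reformulation goes unnamed: the conversion of the extra $\tau_0$ insertions above into the polynomial prefactors $(y+\sum x_j)^2$, $(y+\sum x_j)^4$, $(y+\sum_{i\in I}x_i)$, $(\sum_{i\in J}x_i)^3$, etc.\ is the string equation, iterated once per $\tau_0$. Your phrase ``binomial coefficients of the index shifts'' is indeed what iterating the string equation produces, but without invoking it explicitly there is no way to pass from correlators with $n+3$ or $n+5$ marked points back to the functions $F(y,x_I)$ and $F(x_J)$ in $n+1$ or fewer variables that actually appear in the lemma. Relatedly, you record that $y(y+\sum x_j)F$ ``stays in genus $g$'' but do not say where it comes from: it is the exceptional contribution of the unstable-looking factor $\langle\tau_0^3\rangle_0=1$ inside the first quadratic term, and its extraction is exactly why the displayed sum is restricted to $J\neq\emptyset$. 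Verifying that this is the only such exceptional contribution is part of what your ``routine power-series algebra'' must actually check.
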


From Theorem 1.2, we can group the first and third terms on the
right hand side of Theorem 1.1 and further simplify to the following
recursion relation.
\begin{multline*}
(2g+n-1)\langle\tau_r\prod_{j=1}^n\tau_{d_j}\rangle_g=(2r+3)\langle\tau_0\tau_{r+1}\prod_{j=1}^n\tau_{d_j}\rangle_g\\
-\frac{1}{6}\langle\tau_0^3\tau_r\prod_{j=1}^n\tau_{d_j}\rangle_{g-1}
-\sum_{\underline{n}=I\coprod J}\langle\tau_0\tau_r\prod_{i\in
I}\tau_{d_i}\rangle_{g'}\langle\tau_0^2\prod_{i\in
J}\tau_{d_i}\rangle_{g-g'}.
\end{multline*}
So we need only prove the following equivalent statement of Theorem
1.1.
\begin{proposition} We have
\begin{multline*}
y\sum_{g=0}^\infty(2g+n-1)
F_g(y,x_1,\dots,x_n)=2y\frac{\partial}{\partial y}
\left(\left(\sum_{j=1}^n y+x_j\right) F(y,x_1,\dots,x_n)\right)\\
+\left(\left(y+\sum_{j=1}^n x_j\right)-\frac{y}{6}\left(y+\sum_{j=1}^n x_j\right)^3\right) F(y,x_1,\dots,x_n)\\
-y\sum_{\substack{\underline{n}=I\coprod J\\J\neq\emptyset}}
\left(y+\sum_{i\in I} x_i\right)\left(\sum_{i\in J} x_i\right)^2
 F(y,x_I)F(x_J).
\end{multline*}
\end{proposition}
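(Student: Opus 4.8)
The plan is to prove Proposition 2.7 directly from Witten's KdV coefficient equation (Lemma 2.6), combined with Theorem 1.2 in its generating-function form (Proposition 2.5) applied to the $(n+1)$-point function $F(y,x_1,\dots,x_n)$. Throughout I abbreviate $S=y+\sum_{j=1}^n x_j$ and, for a partition $\underline{n}=I\coprod J$, set $A_I=y+\sum_{i\in I}x_i$ and $B_J=\sum_{i\in J}x_i$, so that $A_I+B_J=S$; I write $F_y$ for $\partial F/\partial y$.

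First I would record what Proposition 2.5 gives for the $n+1$ variables $y,x_1,\dots,x_n$. There the partition sum runs over all splittings of $\{y,x_1,\dots,x_n\}$ into two blocks; splitting it according to whether the block containing $y$ is the first or the second, and using the $I\leftrightarrow J$ symmetry to absorb the factor $\tfrac12$, I obtain
$$\sum_{g=0}^\infty(2g+n)\,S\,F_g(y,x_1,\dots,x_n)=\frac{1}{12}S^4F(y,x_1,\dots,x_n)+\sum_{\substack{\underline{n}=I\coprod J\\ J\neq\emptyset}}A_I^2B_J^2\,F(y,x_I)F(x_J).\qquad(\ast)$$
Terms with an empty block vanish because of the squared factors. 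This bookkeeping step, together with the observation that the quadratic term of $(\ast)$ is exactly the expression that will survive from Witten's equation, is where I expect the real care to be needed.

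Next I would expand the left-hand side of Lemma 2.6 by the product rule, using $\partial_y S=1$, to get $(2y\partial_y+1)(S^2F)=S^2F+4ySF+2yS^2F_y$, and solve this for $2yS^2F_y$. On the quadratic side of Lemma 2.6 I would use $B_J+2A_I=S+A_I$ to rewrite $A_IB_J^3+2A_I^2B_J^2=S\,A_IB_J^2+A_I^2B_J^2$; hence the entire quadratic contribution of Lemma 2.6 becomes $yS\sum A_IB_J^2\,F(y,x_I)F(x_J)+y\sum A_I^2B_J^2\,F(y,x_I)F(x_J)$, the first summand matching $S$ times the quadratic term that appears when Proposition 2.7 is multiplied by $S$, and the second being $y$ times the quadratic term of $(\ast)$.

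Finally, rather than dividing by $S$, I would multiply the asserted identity of Proposition 2.7 through by $S$. Its right-hand side then contains the term $2yS^2F_y$, arising from $2yS\,\partial_y(SF)=2ySF+2yS^2F_y$, into which I substitute the expression obtained from Lemma 2.6. After this substitution the $A_IB_J^2$ and $A_IB_J^3$ contributions cancel against each other and the $S^2F$ terms cancel, leaving $-ySF+\tfrac{y}{12}S^4F+y\sum A_I^2B_J^2\,F(y,x_I)F(x_J)$; replacing the last sum by $(\ast)$ then cancels the $S^4F$ terms and collapses the remainder to $yS\sum_{g}(2g+n-1)F_g(y,x_1,\dots,x_n)$, which is exactly $S$ times the left-hand side of Proposition 2.7. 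Since $S$ is not a zero divisor in the ring of formal power series, cancelling it completes the proof.
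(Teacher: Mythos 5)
Your proof is correct and follows essentially the same route as the paper's: multiply the claimed identity by $S=y+\sum_j x_j$, eliminate the $y$-derivative term using Witten's KdV coefficient equation, and reduce the result to the generating-function form of Theorem 1.2 applied to the $(n+1)$-point function $F(y,x_1,\dots,x_n)$. The only cosmetic differences are that you isolate $2yS^2F_y$ rather than $2yS\,\partial_y(SF)$, and that you explicitly justify cancelling the nonzero factor $S$ at the end, a point the paper leaves implicit.
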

\begin{proof}
From Lemma 2.5, it's not difficult to get the following equation for
the part of differentiation with respect to $y$.
\begin{multline*}
2y\left(y+\sum_{j=1}^n x_j\right)\frac{\partial}{\partial
y}\left(\left(\sum_{j=1}^n y+x_j\right) F(y,x_1,\dots,x_n)\right)\\
=\left(\frac{y}{4}\left(y+\sum_{j=1}^n x_j\right)^4-y\left(y+\sum_{j=1}^n x_j\right)-\left(y+\sum_{j=1}^n x_j\right)^2\right) F(y,x_1,\dots,x_n)\\
+y\sum_{\substack{\underline{n}=I\coprod J\\J\neq\emptyset}}
\left(\left(y+\sum_{i\in I} x_i\right)\left(\sum_{i\in J}
x_i\right)^3+2\left(y+\sum_{i\in I} x_i\right)^2\left(\sum_{i\in J}
x_i\right)^2\right) F(y,x_I)F(x_J).
\end{multline*}

Multiply each side of the equation in Proposition 2.6 by
$y+\sum_{j=1}^n x_j$ and substitute the differential part using the
above equation, we get
\begin{multline*}
y\sum_{g=0}^\infty(2g+n-1)\left(y+\sum_{i=1}^n x_i\right) F_g(y,x_1,\dots,x_n)\\
=\left(\frac{y}{12}\left(y+\sum_{j=1}^n x_j\right)^4-y\left(y+\sum_{j=1}^n x_j\right)\right) F(y,x_1,\dots,x_n)\\
+y\sum_{\substack{\underline{n}=I\coprod J\\J\neq\emptyset}}
\left(y+\sum_{i\in I} x_i\right)^2\left(\sum_{i\in J} x_i\right)^2
 F(y,x_I)F(x_J).
\end{multline*}
Add to each side with the term $$y\left(y+\sum_{j=1}^n x_j\right)
F(y,x_1,\dots,x_n),$$ we get the equation of Proposition 2.4. So we
conclude the proof of Theorem 1.1.
\end{proof}

$$ \ \ \ \ $$


\begin{thebibliography}{999}
\bibitem{Dij} R. Dijkgraaf, {\em Intersection Theory, Integrable Hierarchies and Topological Field Theory},
New symmetry principles in quantum field theory (Cargse, 1991), 95-158, NATO Adv. Sci. Inst. Ser. B Phys., 295, Plenum, New York, 1992.

\bibitem{DVV} R. Dijkgraaf, H. Verlinde, and E. Verlinde, {\em Topological strings
in $d < 1$}, Nuclear Phys. B 352 (1991), 59--86.

\bibitem{Ion} E. Ionel, {\em Topological recursive relations in $H^{2g} (\sM_{g,n})$}, Invent.
Math. 148 (2002), no. 3, 627-658.

\bibitem{Ko} M. Kontsevich, {\em
Intersection theory on the moduli space of curves and the matrix
Airy function}. Comm. Math. Phys. {\bf 147} (1992), no. 1, 1--23.

\bibitem{LX} K. Liu and H. Xu, {\em The n-point functions for intersection numbers on moduli spaces of curves}, math.AG/0701319.

\bibitem{LX2} K. Liu and H. Xu, {\em A simple proof of Mirzakhani's recursion formula of Weil-Petersson volumes}, math.AG/0705.2086.

\bibitem{Mir} M. Mirzakhani, {\em Simple geodesics and Weil-Petersson volumes of moduli spaces of bordered Riemann surfaces}, Invent. Math. 167 (2007), 179--222.

\bibitem{MS} M. Mulase and B. Safnuk, {\em Mirzakhani's recursion relations, Virasoro constraints and the KdV hierarchy}, math.AG/0601194.

\bibitem{Wi} E. Witten,
{\it Two-dimensional gravity and intersection theory on moduli
space}, Surveys in Differential Geometry, vol.1, (1991) 243--310.

\bibitem{Maple} H. Xu, {\em A Maple program to compute intersection indices by Theorem 1.1}, available at\\
\text{http://www.cms.zju.edu.cn/news.asp?id=1275\&ColumnName=pdfbook\&Version=english}
\end{thebibliography}
\end{document}